\documentclass[12pt]{article} % use larger type; default would be 10pt

\usepackage[utf8]{inputenc} % set input encoding (not needed with XeLaTeX)
\usepackage{snoj}
\usepackage{mathtools}
\newtheorem{theorem}{Theorem}
\newtheorem{definition}{Definition}
\newtheorem{lemma}{Lemma}
\newtheorem{example}{Example}
\newtheorem{proposition}{Proposition}
\newtheorem{remark}{Remark}
\newcommand{\Gr}{\operatorname{Gr}}

\newcommand{\copmon}{\langle X\rangle *[Y]}
\newcommand{\copalg}{k\langle X\rangle *k[Y]}
\newcommand{\supp}{\operatorname{supp}}
\usepackage{setspace}
\usepackage{biblatex} %Imports biblatex package
\usepackage{geometry} % to change the page dimensions
\usepackage{graphicx} % support the \includegraphics command and options

\usepackage{booktabs} % for much better looking tables
\usepackage{array} % for better arrays (eg matrices) in maths
\usepackage{paralist} % very flexible & customisable lists (eg. enumerate/itemize, etc.)
\usepackage{verbatim} % adds environment for commenting out blocks of text & for better verbatim
\usepackage{subfig} % make it possible to include more than one captioned figure/table in a single float
\usepackage{fancyhdr} % This should be set AFTER setting up the page geometry
\usepackage{sectsty}
\usepackage[nottoc,notlof,notlot]{tocbibind} % Put the bibliography in the ToC

\title{Brief Article}
\author{The Author}
\begin{document}
\begin{center}
{\Large \textbf{Commutativity of centralizers in a coproduct of a free algebra and a polynomial algebra}}

\bigskip
{\large Jakob Jurij Snoj}\\
{\href{mailto: jakob.snoj@fmf.uni-lj.si}{jakob.snoj@fmf.uni-lj.si}}\\
{\emph{Faculty of Mathematics and Physics (FMF), University of Ljubljana}}
\end{center}

\begin{abstract}
We show that the centralizer of a nonscalar element in the coproduct $\copalg$ of a free associative algebra and a polynomial algebra over a given field is commutative. For $k\langle X \rangle$ this is part of Bergman's centralizer theorem. Our proof relies on a reduction given in Bergman's proof and is of combinatorial nature, employing a strict order structure of the coproduct monoid.
\bigskip

\noindent\emph{Keywords: Bergman's centralizer theorem, free associative algebras, coproduct, centralizer, commutativity}\\
\noindent\emph{Math.~Subj.~Class.:  16S10}
\end{abstract}

\section{Introduction}

Bergman's centralizer theorem states that the centralizer of a nonscalar element in a free associative algebra over a field $k$ is isomorphic to a univariate polynomial ring over $k$. The theorem was originally proven by G.~M.~Bergman in \cite{bergman} using the theory of free ideal rings (firs) and the $n$-term weak algorithm. A proof largely following the same ideas but differing in a few key steps is given by P.~M.~Cohn in \cite{cohn}. Later, the theorem was reproven by A.~Belov-Kanel, F.~Razavinia and W.~Zhang using generic matrices and quantization in \cite{quanti} and \cite{quanti2}. In \cite[Chapter~3]{sela}, Z.~Sela utilises a combinatorial approach to prove certain special cases.

We investigate the properties of the centralizer of a nonscalar element in a similar structure, differing from the free algebra roughly by requiring that some of the variables commute with each other, but not with the remaining variables. Formally, we consider the coproduct $\copalg$ of a free algebra and a polynomial algebra over a field $k$.

This construction recently appeared in connection with the rooted generalized hierarchical product of graphs, introduced in \cite{gproduct} by L.~Barrière, C.~Dalfó, M.~A.~Fiol and M.~Mitjana and later discussed by W.~Imrich, R.~Kalinowski, M.~Pilśniak in \cite{graphs1} and W.~Imrich, I.~Klep and D.~Smertnig in \cite{graphs}.

It is also a special case of a monoid algebra over a graph monoid. Given a graph, the corresponding graph monoid --- also called a trace monoid or a free partially commutative monoid --- is obtained by considering the free monoid generated by the vertex set and quotienting it by the congruence generated by the relations $ab=ba$ for every edge $\{a, b\}$. This construction was introduced by P.~Cartier and D.~Foata in \cite{gp1} and is itself a special case of a graph product of monoids, studied for instance by D.~Yang and V.~Gould in \cite{gp2}. It is worth mentioning that the term 'graph product' is used here to denote a notion completely unrelated to the construction discussed in the previous paragraph.

We prove the following partial generalization of Bergman's centralizer theorem, analogous to a main step in Bergman's original proof of this result. 

\begin{theorem}\label{glavni}
Let $X$ and $Y$ be sets and $k$ a field. Let $u$ be a nonscalar element of $k\langle X\rangle * k[Y]$. Then the centralizer of $u$ in $\copalg$ is commutative. 
\end{theorem}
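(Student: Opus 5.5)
We follow the reduction in Bergman's original proof. A monomial of $\copalg$ is an element of the coproduct monoid $\copmon$. Such an element has a unique normal form as an alternating product of nonempty words in $X$ and nontrivial monomials in $Y$. We first fix a total order on $\copmon$ that is compatible with multiplication on both sides. For instance, compare by total degree first and then by a suitable lexicographic order on normal forms, taking care that commuting blocks in $Y$ merge correctly. For $f\neq 0$ write $\bar f$ for its leading monomial, so that $\overline{fg}=\bar f\,\bar g$. The monoid $\copmon$ is cancellative, so $\copalg$ is a domain. Bergman's reduction then says the following. Suppose that, for the leading monomial $w=\bar u$ of a nonscalar $u$, the centralizer of $w$ in the monoid $\copmon$ is commutative, and moreover any two monomials $a,b$ commuting with $w$ satisfy a suitable ``uniqueness of roots'' property. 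Then the centralizer $C(u)$ is commutative.

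Concretely, we argue as follows. Let $C=C(u)$. For $f\in C$, comparing leading monomials in $fu=uf$ gives $\bar f\,w=w\,\bar f$. Take $f,g\in C$. We want to show $fg-gf=0$. Suppose not, and set $h=fg-gf\in C$. The key step is to show that the set $\{\bar f : f\in C\}$ lies in a commutative submonoid $M_w$ of $\copmon$. Within $M_w$, monomials are determined by their degree, or more precisely the map $\bar f\mapsto\deg \bar f$ is injective on the leading monomials of $C$ up to the correct normalization. Granting this, $\overline{fg}=\bar f\bar g=\bar g\bar f=\overline{gf}$. By a Bergman-style induction on degree we then show $C$ is commutative. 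One shows that for each degree $d$ there is at most one leading monomial of elements of $C$ in degree $d$. One also shows that the leading coefficients can be cancelled: if $\bar f=\bar g$ then $f-cg$ has smaller leading term and still lies in $C$. A standard argument then identifies $C$ with a subalgebra embedded in $k[t]$, or at least shows $\operatorname{gr} C$ is commutative and lifts this to $C$. Bergman's approach gives the lift: $C$ embeds in a ring of formal Laurent-type series in a root of $u$, which is commutative.

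So the combinatorial heart is the description of the centralizer of a monomial $w\neq 1$ in $\copmon$. We aim to prove two things. First, if $w$ involves some letter from $X$, then after cyclic conjugation its centralizer is cyclic, generated by a primitive root $z$ with $w=z^n$, exactly as in the free monoid. This follows from a Lyndon–Schützenberger-type argument applied to the normal form with the $Y$-blocks treated as letters of an infinite alphabet. Second, if $w\in[Y]$, then its centralizer is $[Y]$ itself, which is commutative. In both cases $M_w$ is commutative. In the first case the root-uniqueness holds because $M_w\cong\mathbb N$. In the second case we use a direct argument: if $w\in[Y]$ and $f$ commutes with $u$, we show $f\in k[Y]$. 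We do this by looking at a leading-term argument with respect to the ``$X$-degree'', that is, the number of $X$-letters, plus the lowest $X$-degree component of $u$. The lowest-degree part of $u$ in $X$-degree then lies in $k[Y]$ after subtracting scalars.

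The main obstacle is the mixed case. Here $\bar u$ contains $X$-letters, but lower terms, or commuting $Y$-parts at the ends, break the naive cyclic-root argument. For example, monomials $y\,x\,y'$ and conjugates of them interact with the commuting $Y$-blocks when concatenated. This makes $\overline{fg}$ not simply the concatenation of normal forms. We would first try to handle this by choosing the order so that $X$-degree dominates, and by proving the monoid-level statement for cyclically reduced $w$. Here cyclically reduced means the first and last blocks are not both in $[Y]$. We would then reduce general $u$ to this case by conjugating by a suitable $Y$-monomial. This conjugation is an automorphism only after localization, so the reduction must be carried out carefully at the level of leading terms.
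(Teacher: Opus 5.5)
Your strategy (leading monomials for a two-sided compatible order on $\copmon$, then Proposition~\ref{bergman}) is viable, but as written it fails in exactly the case the paper works hardest on.

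\textbf{The false step.} With the degree-first order you propose, $\overline{u}$ can lie in $[Y]$ while $u$ is non-pure. Your claim ``if $w=\overline{u}\in[Y]$ and $f$ commutes with $u$, then $f\in k[Y]$'' is then false. Take $x\in X$, $y\in Y$, $u=y^2+x$: the element $f=u$ is a counterexample. In this situation you only learn that leading monomials of $C(u)$ lie in $[Y]$. When $|Y|\ge 2$ this monoid is commutative but not cyclic, so ``at most one leading monomial per degree'' does not follow. This top-pure but non-pure case is exactly what Lemmas~\ref{research3}, \ref{aux} and \ref{maintwo} handle.

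\textbf{The misidentified obstacle.} The ``main obstacle'' you name is not one. Once $\overline{f}\,\overline{u}=\overline{u}\,\overline{f}$ holds in the monoid, lower-order terms of $u$ play no role. Merging of $Y$-blocks is simply the monoid multiplication, and no conjugation by $Y$-monomials or localization is needed. Because you leave the monoid lemma at ``we would try'', the key step is currently unproven. Also, the existence of a strict order on $\copmon$ is a nontrivial result that the paper cites; a lexicographic order on normal forms is not obviously compatible with merging.

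\textbf{The repair.} It is short, and your own hint (``let $X$-degree dominate'') is the right idea.
\begin{enumerate}
\item Order $\copmon$ first by the number of $X$-letters, then by length, then by the paper's strict order. Both counts are additive, so this is again strict. Now $1$ is minimal, and $\overline{u}$ is pure if and only if $u\in k[Y]$.
\item If $u\in k[Y]$ is nonscalar, a non-pure $f\in C(u)$ would give
$p(\overline{f})=p(\overline{f}\,\overline{u})=p(\overline{u}\,\overline{f})=\overline{u}\,p(\overline{f})$ with $\overline{u}\neq 1$, which is impossible. Hence $C(u)=k[Y]$.
\item If $u$ is non-pure, put $w=\overline{u}$ and suppose $zw=wz$. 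Comparing prefixes and suffixes forces $z$ to be non-pure with $p(z)=p(w)=:p$ and $s(z)=s(w)=:s$.
\item Write $z=pz_1s$ and $w=pw_1s$. Cancelling gives $z_1cw_1=w_1cz_1$ with $c=sp$. Since $z_1,w_1$ begin and end with $X$-letters, this is an identity of words over the alphabet $X\sqcup([Y]\setminus\{1\})$.
\item Lyndon--Sch\"utzenberger then gives $z_1c=t^j$ and $w_1c=t^k$. Writing $t=t_1c$ yields $z=r^j$ and $w=r^k$ with $r=pt_1s$. Hence $\overline{f}=r^{e(f)}$ for every $0\neq f\in C(u)$.
\item Filter $C(u)$ by $e$, not by total degree; under this order $\overline{f}$ need not lie in the top-degree component. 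Then $e(fg)=e(f)+e(g)$, and cancelling leading coefficients shows every graded piece is at most one-dimensional. Proposition~\ref{bergman} now gives commutativity. It is the whole lift, so no embedding into $k[t]$ or Laurent series is needed.
\end{enumerate}

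\textbf{Comparison with the paper.} The repaired route bypasses the support analysis of Lemmas~\ref{researchlemma}--\ref{maintwo} entirely. The paper needs those lemmas because it keeps the total-degree filtration, under which $\varphi(u)$ may be pure although $u$ is not.
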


In Bergman's original paper, the proof of the corresponding property is reduced by \cite[Proposition~2.1]{bergman} to showing that the centralizer's corresponding graded ring is at most one-dimensional in each degree as an algebra over $k$. The same reduction serves as the starting point for our proof. Bergman's remaining argument is short but relies heavily on free algebras satisfying a property called the 2-term weak algorithm, which is a property not enjoyed by our coproduct, so a different approach must be taken in this case.

The approach taken in this paper makes heavy use of the fact that the monoid consisting of the basis elements of the given coproduct algebra can be strictly ordered, i.e., ordered in a way that respects the product operation (\cite{total2} and \cite{total1}). The requirement that the components of the centralizer's corresponding graded ring are at most one-dimensional can be rephrased in terms of homogeneous elements, and by some reductions, it suffices to consider the support of a product of two homogeneous commuting elements of equal total degree. While this support could be easily described in terms of the supports of individual factors in a free algebra, this is considerably less straightforward in the coproduct. Using the well-order of basis elements allows us to give a partial description of this support, and this turns out to be sufficient to complete the proof.

\section{Preliminaries}
We work in the coproduct $k\langle X\rangle * k[Y]$ of a free algebra $k\langle X \rangle$ and a polynomial algebra $k[Y]$ over a field $k$ and sets of variables $X$ and $Y$, which can also be infinite or empty. We refer to the elements of sets $X$ and $Y$ as \emph{noncommuting} and \emph{commuting variables}, respectively.

This coproduct algebra is the monoid $k$-algebra of the monoid $\langle X\rangle * [Y]$, which is the coproduct of the free monoid $\langle X\rangle$ and the free abelian monoid $[Y]$. Thus, elements of $\langle X\rangle * [Y]$ are products of elements of the disjoint union $X\sqcup Y$, with two such products equal if they differ only in the order of consecutive variables in $Y$.

For any element $w=a_1a_2\dots a_n\in \copmon$ with each $a_i\in X\sqcup Y$, its \emph{length} is $|w|=n$. The \emph{support} $\supp x$ of an element $x\in \copalg$ is the set of all elements of $\copmon$ appearing in $x$ with nonzero coefficient.

Recall that a \emph{(positive) filtration} of an algebra $R$ is a family of vector subspaces $$R_0\subseteq R_1\subseteq R_2 \subseteq\ldots$$ such that 
\begin{itemize}
\item $R = \bigcup_{i\geq 0} R_i$,
\item $R_iR_j \subseteq R_{i+j}$ for all $i$,~$j \geq 0$ and $1 \in R_0$.
\end{itemize}
Given a filtered algebra $R$, the \emph{degree} of a nonzero element $a$ is $d(a)=\min\{\,i\mid a\in R_i\,\} \in \Z_{\geq 0}$. The degree of $0$ is defined to be $-\infty$.

A filtered algebra $R$ also has an associated graded algebra $\Gr R$: 
as vector subspaces,
\[
\Gr R \coloneqq \bigoplus_{i \geq \Z} (\Gr R)_i \qquad\text{with}\qquad{(\Gr R)_i \coloneqq R_i / R_{i-1}},
\]
and the multiplication map is induced from $R$, extended linearly from $(\Gr R)_i \times (\Gr R)_j \to (\Gr R)_{i+j}$ given by $(a, b) \mapsto ab$. 
The \emph{leading term} of $0 \ne x\in R$ is the image of $x$ under the canonical quotient map $R_{d(x)}\rightarrow (\Gr R)_{d(x)}$ and for each integer $n$, the subspace $(\Gr R)_n$ is said to consist of \emph{homogeneous elements of degree $n$.}

The algebra $\copalg$ is filtered by total degree and canonically isomorphic to its associated graded algebra.
For $x\in\copalg$, we denote its leading term by $\overline{x}$. We say an element $x\in\copalg$ is \emph{homogeneous of degree $n$} if $\supp x$ only consists of elements of length $n$. The homogeneous elements of a fixed degree $n$ of $\copalg$ form a vector space canonically isomorphic to $(\Gr(\copalg))_n$.

An important ingredient of the proof is the following proposition of G.~M.~Bergman, used in \cite[Proposition~2.1]{bergman} as a part of his proof of Bergman's centralizer theorem.

\begin{proposition} \label{bergman}
Let $C$ be a $\Z$-filtered algebra over a field $k$ in which the relation $d(ab)=d(a)+d(b)$ is satisfied for all $a, b\in C$. If for all $n$, the $k$-vector space $(\Gr C)_n$ is $0$ or $1$-dimensional, then $C$ is commutative.
\end{proposition}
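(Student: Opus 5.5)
The plan is to embed $C$ into a "Laurent series ring" in one variable over $k$, using the completion of a division ring of fractions, and to deduce commutativity there. The hypothesis $d(ab)=d(a)+d(b)$ says two things. First, $C$ is a domain. Second, $\Gr C$ is a graded domain: the product of two nonzero leading terms is the leading term of the product, hence nonzero. Since $1\in C$ has degree $0$, we get $(\Gr C)_0=k\cdot\overline{1}$. The set $S=\{n : (\Gr C)_n\ne 0\}$ is a submonoid of $\Z$. If $S=\{0\}$, every nonzero element is a scalar plus an element of lower degree, and the argument below applies trivially, so I assume $S\neq\{0\}$.

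\emph{Step 1: an Ore localization.} I will show that $C$ is a left and right Ore domain. In the situation needed for Theorem~\ref{glavni} the filtration is positive, so $\dim C_n\le n+1$ and $C$ has linear growth. A domain of subexponential growth is Ore: if $aC\cap bC=0$ for nonzero $a,b$, then the elements $a,\,ba,\,b^2a,\dots$ generate a free subalgebra, which forces exponential growth. For a general $\Z$-filtration I would argue the same way with degrees shifted, or treat only the bounded-below case, which is all the paper needs.

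Let $D$ be the resulting division ring of fractions. Setting $d(a^{-1}b)=d(b)-d(a)$ is well defined, and it extends $d$ to a valuation-type degree function on $D$. Then $\Gr D$ is a graded division ring, and each of its components is at most one-dimensional: a leading term in $\Gr D$ is a quotient of leading terms from $\Gr C$, and a graded division ring has all nonzero components of the same dimension as its degree-$0$ part. That degree-$0$ part is $k$, because any two elements of equal degree have a quotient of degree $0$, and the leading term of such a quotient is $\overline{1}$ up to a scalar.

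\emph{Step 2: completion.} Let $g>0$ generate the subgroup of $\Z$ generated by $S$, and choose $t\in D$ with $d(t)=g$. Form the completion $\widehat D$ of $D$ with respect to the descending filtration, that is, with respect to the metric making elements of very negative degree small. Every nonzero $x\in\widehat D$ can be written as a convergent series $x=\sum_{i\le N} c_i t^i$ with $c_i\in k$. To see this, choose $c_N$ with $\overline{x}=c_N\overline{t^N}$; this is possible because the components are one-dimensional. Then $d(x-c_Nt^N)<d(x)$, and iterating produces the series. Two such series commute, since $t$ commutes with its own powers and with scalars, and multiplication in $\widehat D$ is continuous. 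Hence $\widehat D$ is commutative, and therefore so is $C\subseteq D\subseteq\widehat D$.

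\emph{Main obstacle.} The delicate step is the analysis of $\widehat D$. I must check that the degree function extends to the completion and that the expansion converges and is unique. This requires the filtration to be separated ($\bigcap_n D_n=0$, which holds since $d$ is finite on nonzero elements) and the metric to be ultrametric, so that Cauchy sequences of partial sums converge. The Ore step is the other possible gap. If the growth argument is inconvenient, I would bypass localization: pick $a\in C$ with $d(a)=m>0$, and prove directly, by descending induction on degree, that every $b\in C$ agrees modulo arbitrarily low filtration with a series in $a$, after adjoining $m$-th roots formally. This is essentially the same expansion argument, carried out inside a completion of $C[a^{-1}]$.
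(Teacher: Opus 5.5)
The paper gives no proof of this proposition. It quotes it from Bergman and uses it as a black box, so your argument is a genuinely different, self-contained route.

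For positive filtrations your proof is correct. These are the only filtrations the paper defines, and the centralizer inherits one from $\copalg$. The steps check out:
\begin{itemize}
\item $\dim C_n\le n+1$ gives linear growth, so $C$ is a two-sided Ore domain.
\item $d$ extends multiplicatively and ultrametrically to the division ring $D$.
\item $(\Gr D)_0=k$, because $d(c)=d(s)$ forces $c=\lambda s+c'$ with $d(c')<d(s)$.
\item The degrees of $D$ form the group $S-S=g\Z$, so an element $t$ of degree $g$ exists.
\end{itemize}

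The step you call the main obstacle is avoidable, since you never need $\widehat D$. Work in $D$: your iteration gives, for $a,b\in C$ and any $M$, Laurent polynomials $a_M,b_M\in k[t,t^{-1}]$ with $d(a-a_M)<-M$ and $d(b-b_M)<-M$. Since $a_Mb_M=b_Ma_M$, you can write $ab-ba=(a-a_M)b+a_M(b-b_M)-(b-b_M)a-b_M(a-a_M)$. This gives $d(ab-ba)<\max(d(a),d(b))-M$ for every $M$, so $ab=ba$.

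Your remark that a general $\Z$-filtration is handled ``with degrees shifted'' is not right. Once some element has negative degree, its powers have degrees unbounded below, so no shift makes the filtration positive. The $C_n$ may then be infinite-dimensional, as for $k[[t^{-1}]]$, and the growth argument for the Ore property is lost. That case needs a separate argument, though the paper never uses it.

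Compared with the bare citation, your route costs a page but is self-contained. It shows that the Ore step is the only place positivity enters. If you keep the completion, it also gives the extra information that $C$ embeds in the Laurent series field $k((t^{-1}))$.
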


Bergman applies this result to a centralizer of a nonscalar element in a free algebra over a field.
The verification of the condition of the proposition depends on the free algebra satisfying the $2$-term weak algorithm (for a definition, see \cite[Section~2.4]{cohn} or \cite[Definition~1.3]{bergman}). As the following straightforward example demonstrates, this property does not hold for $\copalg$.
\begin{example}
Let $Y=\{y_1, y_2\}$. We consider $k[Y]$ as a subalgebra of $\copalg$. The elements $y_1$ and $y_2$ have a common right multiple $y_1y_2$, but the right ideal generated by $y_1$ and $y_2$ is clearly not principal.
\end{example}

In our proof, we thus take a different approach. Our main arguments are of combinatorial nature and rely heavily on a total order of the coproduct monoid.

\section{A strict total order and some non-vanishing properties}

We first introduce a type of order compatible with the monoid structure.

\begin{definition}
A monoid $(M, \cdot, 1)$ is \emph{strictly ordered} if there exists a total order $\leq$ on $M$ such that for all $a$, $b$, $c\in M$, $$a< b\text{ implies } ac<bc \text{ and } ca<cb.$$
\end{definition}

The monoid $\copmon$ can be strictly ordered as follows. We order each of the sets $X$ and $Y$ with any total order. We then order the elements of the free monoid $\langle X\rangle$ with the \emph{shortlex} order: for two elements $a, b\in \langle X\rangle$, we set $a<b$ if either $|a|<|b|$, or $|a|=|b|$ and the leftmost variable in which $a$ and $b$ differ is smaller in $a$ than in $b$.

On $[Y]$, we use the lexicographical order: we set $1<a$ for all nonempty words $a\in[Y]$ and, factorizing each element uniquely into a product of the form $y_1^{\alpha_1}\cdots y_r^{\alpha_r}$, where the $y_i$ are variables satisfying $y_1\leq\cdots \leq y_r$ and $\alpha_i> 0$, we set $a<b$ if the smallest factor in which $a$ and $b$ differ either appears only in $b$ or appears in $b$ with bigger exponent than in $a$.

Since $\langle X\rangle$ and $[Y]$ are strictly ordered, the coproduct $\langle X\rangle *[Y]$ can also be ordered with a strict order (this fact is not trivial and appears in \cite{total2} and \cite[Theorem~16]{total1}).
We will henceforth denote this total order with the symbol $<$.

In a free algebra, the support of a product of two homogeneous elements $a$ and $b$ is uniquely determined by the supports of the individual elements: it consists of exactly the elements $xy$ with $x\in \supp a$ and $y\in \supp b$. In the algebra $\copalg$, this is not necessarily the case, as demonstrated by the following straightforward example.

\begin{example}
Let $X=\{x\}$ and $Y=\{y_1, y_2\}$. In $\copalg$, we have
$$(xy_1+xy_2)(y_1x-y_2x)=xy_1^2x-xy_2^2x.$$
Indeed, as $y_1$ and $y_2$ commute, the term $xy_1y_2x$ appears twice with opposite coefficients, so this term vanishes in the product.
\end{example}

One of the ideas of our proof is to try to control this phenomenon and describe some cases when this vanishing cannot occur. We will see how to do this with the help of our total order. First, we isolate the relevant parts of the elements of $\copmon$.

\begin{definition}
    Let $w\in \copmon$. The \emph{prefix} $p(w)$ is the product of all variables of $Y$ appearing before the first variable of $X$ in $w$. Symmetrically, the \emph{suffix} $s(w)$ is the product of all variables of $Y$ appearing after the last variable of $X$ in $w$.
\end{definition}

If $w$ consists only of commuting variables, we set $p(w)=s(w)=w$. We will sometimes treat such elements separately.

\begin{definition}
    Let $f = \sum_{i\ge 0} f_i \in \copalg$ be nonzero with homogeneous components $f_i$.
    \begin{enumerate}
    \item The element $f$ is \emph{pure} if $f \in k[Y]$, and \emph{non-pure} otherwise.
    \item The element $f$ is $m$-pure for $m \in \Z_{\ge 1}$ if $f_m$ is pure.
    \item The element $f$ is top-pure if it is $d$-pure, where $d$ is the total degree of $f$.
    \end{enumerate}
\end{definition}

Next, we introduce an equivalence relation on $\copmon$.

\begin{definition}
We write $u\sim v$ if $|u|=|v|$ and either $u$ and $v$ are both pure, or, writing $u=p(u)u_1s(u)$ and $v=p(v)v_1s(v)$, we have $|p(u)|=|p(v)|$, $u_1=v_1$ and $|s(u)|=|s(v)|$. We denote the equivalence class of $x$ under this relation by $[x]$.
\end{definition}
In other words, each equivalence class of $\sim$ consists of all elements of $\copmon$ of the same length with the first and last noncommuting variable (if any) occuring respectively in the same positions, and all variables between them forming the same element.

\begin{lemma}\label{research1}
Let $X$ and $Y$ be sets. Let $v$, $w$, $v'$ and $w'$ be elements of $\copmon$ satisfying the relation $vw=v'w'$ such that $|v|=|v'|$ and $|w|=|w'|$. Then, the following holds.
\begin{enumerate}[(i)]
\item $v\sim v'$ and $w\sim w'$.
\item If $v$ is non-pure, then $p(v)=p(v')$.
\item If $w$ is non-pure, then $s(w)=s(w')$.
\item $s(v)p(w)=s(v')p(w')$.
\end{enumerate}
\end{lemma}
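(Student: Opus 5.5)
The plan is to use a normal form for elements of $\copmon$. Every $w\in\copmon$ factors uniquely as
\[
w = c_0 x_1 c_1 x_2 \cdots x_n c_n,
\]
with $x_i\in X$ and $c_i\in[Y]$ (possibly $1$). Two words are equal exactly when they have the same sequence $x_1,\dots,x_n$ and the same blocks $c_0,\dots,c_n$. The positions of the $x_i$ in the word are then determined by $n$ and the lengths $|c_j|$. If $n\ge1$, then $p(w)=c_0$ and $s(w)=c_n$. All four claims will be read off by writing $vw$ and $v'w'$ in normal form and comparing the two factorizations.

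Write $v=c_0x_1\cdots x_nc_n$ and $w=d_0z_1\cdots z_md_m$. The product then has normal form
\[
vw = c_0x_1\cdots x_n (c_nd_0) z_1\cdots z_m d_m,
\]
where the middle block merges, with the obvious conventions when $n=0$ or $m=0$. Do the same for $v'=c_0'x_1'\cdots$ and $w'$. The key observation is that the positions of the $X$-letters in $vw$ are intrinsic to the element. Since $|v|=|v'|$, the $X$-letters of $vw$ lying in positions $\le|v|$ are exactly those coming from $v$, and likewise those coming from $v'$. Hence $v$ and $v'$ contain the same number $n$ of noncommuting letters, in the same positions and with the same letters. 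The same holds for $w$ and $w'$.

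Uniqueness of the normal form then gives block equalities. All inner blocks of $v$ and $v'$ coincide, namely $c_j=c_j'$ for $0<j<n$. If $n\ge1$, the first blocks also agree, $c_0=c_0'$. The merged blocks agree, $c_nd_0=c_n'd_0'$. When $n=0$ or $m=0$ the merged block extends to an end of the word, and the same conclusion holds after the obvious adjustments.

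The four claims now follow.
\begin{enumerate}[(i)]
\item Equal positions of the first and last $X$-letters give $|p(v)|=|p(v')|$ and $|s(v)|=|s(v')|$. Equal inner blocks and letters give equal middle parts $v_1=v_1'$. If $n=0$, both $v$ and $v'$ are pure of equal length. So $v\sim v'$, and symmetrically $w\sim w'$.
\item If $v$ is non-pure, then $p(v)=c_0=c_0'=p(v')$.
\item This is symmetric to (ii).
\item This is the merged-block equality $s(v)p(w)=c_nd_0=c_n'd_0'=s(v')p(w')$. It uses the convention $p=s=$ the whole word for pure elements.
\end{enumerate}

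The main obstacle is bookkeeping in the degenerate cases where $v$ or $w$ is pure. For example, if $v$ is pure, the first block of $vw$ is $v\,p(w)$, which equals $s(v)p(w)$ under the convention. If both $v$ and $w$ are pure, $vw$ is a single commutative block and (iv) reads $vw=v'w'$. These cases need a short separate check, but they are not a conceptual difficulty.
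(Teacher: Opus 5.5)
Your proposal is correct and follows essentially the same route as the paper. Both arguments locate the noncommuting letters of $vw=v'w'$ by position, using $|v|=|v'|$, and then compare the commutative blocks. The only difference is in (iv): you read $s(v)p(w)=s(v')p(w')$ directly off uniqueness of the normal form for the merged middle block, whereas the paper first matches prefixes, cores and suffixes and then uses cancellation in $\langle X\rangle*[Y]$.
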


\begin{proof}
\emph{(i)}\;\; Let $n=|v|=|v'|$, and $m=|w|=|w'|$. We prove $v\sim v'$, the other relation then follows analogously.

If $v$ is pure, the first $n$ variables of $v'w'=vw$ are commuting variables, so $v'$ is also pure and $v\sim v'$ follows immediately. In the remaining case, we can write $vw=p(v)v_1s(v)w=v'w'$. Since the element $v_1$ begins and ends with a non-commuting variable, it must appear in the same position in $v'w'$ and hence $v'$, similarly, all variables in the other positions of $v'$ are commuting by construction. Consequently, the element $v'$ can only differ from $v$ in its suffix, so $v\sim v'$ by definition.

\emph{(ii), (iii)}\;\; The relation $vw=p(v)v_1s(v)w=v'w'$ also directly implies that $p(v)$ is the prefix of $v'$, proving the second item. The third is then proven symmetrically.

\emph{(iv)}\;\; Finally, if $v$ and $w$ are non-pure, we can write $$vw=p(v)v_1s(v)p(w)w_1s(w)=p(v')v_2s(v')p(w')w_2s(w')=v'w'.$$ By the preceding arguments, we have $p(v)=p(v')$, $v_1=v_2$, $s(w)=s(w')$ and $w_1=w_2$. Since $\copmon$ is a cancellation monoid, it follows that their middle parts must also be equal. If $v$ or $w$ is pure, we can adjust the above equality accordingly and conclude in the same way.
\end{proof}

We will often also be interested in a subset of the support of an element. For an element $a\in \copalg$ and an element $w\in \copmon$, we will use the notation $\supp_w a$ to denote the intersection $\supp a\cap [w]$.

The following lemma now allows us to describe some elements of the support of a product of two homogeneous elements.

\begin{lemma}\label{researchlemma}
Let $X$ and $Y$ be sets. Let $a$ and $b$ be two homogeneous elements of $\copalg$. Let $x\in \supp a$ and $y\in \supp b$ be elements such that the following two conditions hold:
\begin{enumerate}
\item The suffix $s(x)$ is maximal in $\{s(w)\mid w\in \supp_x a\}$, or $x$ is non-pure and the suffix $s(x)$ is maximal in $\{s(w)\mid w\in\supp_x a,\;p(w)=p(x)\}.$
\item The prefix $p(y)$ is maximal in $\{p(w)\mid w\in \supp_y b\}$, or $y$ is non-pure and the prefix $p(y)$ is maximal in $\{p(w)\mid w\in\supp_y b,\; s(w)=s(y)\}.$
\end{enumerate}
Then, we have $xy\in \supp ab$.
\end{lemma}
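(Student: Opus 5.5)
The plan is to compute the coefficient of $xy$ in $ab$ directly and show that exactly one pair of basis elements contributes to it, namely $(x,y)$. Let $a$ be homogeneous of degree $n$ and $b$ homogeneous of degree $m$. Then
\[
\text{coefficient of } xy \text{ in } ab \;=\; \sum_{\substack{v\in\supp a,\ w\in\supp b\\ vw=xy}} \alpha_v\beta_w ,
\]
where $\alpha_v,\beta_w$ denote the coefficients in $a$ and $b$. Every such $v$ has $|v|=n=|x|$ and every such $w$ has $|w|=m=|y|$, so Lemma~\ref{research1} applies to each contributing pair. Once I show that the only contributing pair is $(v,w)=(x,y)$, the coefficient equals $\alpha_x\beta_y\neq 0$, and hence $xy\in\supp ab$.

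So fix a contributing pair $(v,w)$. By Lemma~\ref{research1}(i) we have $v\sim x$ and $w\sim y$, so $v\in\supp_x a$ and $w\in\supp_y b$. I next check that $v$ lies in the set over which condition~1 asserts maximality.
\begin{itemize}
\item Under the first alternative of condition~1 this is immediate.
\item Under the second alternative, $x$ is non-pure, so Lemma~\ref{research1}(ii), applied to the equality $xy=vw$, gives $p(v)=p(x)$.
\end{itemize}
In either case $v$ lies in the relevant set, so $s(v)\le s(x)$. Symmetrically, using Lemma~\ref{research1}(iii) in the second alternative of condition~2, we get $p(w)\le p(y)$. Lemma~\ref{research1}(iv) gives $s(v)p(w)=s(x)p(y)$ in $[Y]$. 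Since $[Y]$ is commutative and its lexicographic order is strict, a strict inequality $s(v)<s(x)$ would give
\[
s(v)p(w)<s(x)p(w)\le s(x)p(y),
\]
a contradiction. The same argument handles $p(w)<p(y)$. Therefore $s(v)=s(x)$ and $p(w)=p(y)$.

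It remains to show that these equalities force $v=x$ and $w=y$.
\begin{itemize}
\item If $x$ is pure, then $v\sim x$ means $v$ is pure, so $v=s(v)=s(x)=x$.
\item If $x$ is non-pure, then $v$ is non-pure since $v\sim x$. Writing $x=p(x)x_1s(x)$ and $v=p(v)v_1s(v)$, the relation $v\sim x$ gives $v_1=x_1$, Lemma~\ref{research1}(ii) gives $p(v)=p(x)$, and we have just shown $s(v)=s(x)$. Hence $v=x$.
\end{itemize}
The argument for $w=y$ is symmetric, using Lemma~\ref{research1}(iii) for the suffix.

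The step needing the most care is the middle one: checking in each alternative of the hypotheses that $v$ (respectively $w$) really lies in the set where maximality is assumed, and then using strictness of the order on $[Y]$ to turn $s(v)p(w)=s(x)p(y)$ together with the two inequalities into two equalities. The degenerate cases where $x$ or $y$ is pure need separate checking, since then $p=s=$ the whole word.
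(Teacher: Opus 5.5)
Your proposal is correct and follows the paper's proof essentially step for step. You use Lemma~\ref{research1} to place any contributing pair $(v,w)$ with $vw=xy$ in $\supp_x a\times\supp_y b$, with matching prefix or suffix in the second alternatives. Maximality together with strictness of the order and part (iv) then force $s(v)=s(x)$ and $p(w)=p(y)$, hence $(v,w)=(x,y)$. Your explicit check that these equalities give $v=x$ and $w=y$, including the pure cases, fills in a step the paper only states.
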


\begin{proof}
It will suffice to prove that $xy$ cannot arise as a product of two other elements in $\supp a$ and $\supp b$, respectively. Let $xy=x'y'$ for some $x'\in\supp a$ and $y'\in\supp b$. By recalling $a$ and $b$ are homogeneous and using Lemma \ref{research1}, we observe $x'\in \supp_x a$ and $y'\in\supp_y b$. If $x$ is non-pure, we also obtain $p(x)=p(x')$, and if $y$ is non-pure, we obtain $s(y)=s(y')$.

Whether $x$ is pure or non-pure, the assumption now allows us to argue by maximality to obtain $s(x)\geq s(x')$. Symmetrically, we obtain $p(y)\geq p(y')$. By recalling $\copmon$ is a strictly ordered monoid and using Lemma \ref{research1} again, we obtain $s(x)p(y)=s(x')p(y')$, so both inequalities must be equalities. In particular, this shows $x=x'$ and $y=y'$, leading to the final contradiction.
\end{proof}

\section{Considering commuting homogeneous elements}

We next determine how pairs of commuting homogeneous elements of equal total degree are related.

\begin{lemma} \label{research2}
Let $X$ and $Y$ be sets and $k$ a field. Let $a$, $b\in k\langle X\rangle * k[Y]$ be two commuting homogeneous elements of equal total degree. Suppose $v_0\in \supp a$ is non-pure.  Then, the sets $\supp_{v_0} a$ and $\supp_{v_0} b$ are nonempty. Additionally, we have the equalities
\begin{align*}\{\,p(v)\,\mid\,v\in\supp_{v_0} a\,\}&=\{\,p(w)\,\mid\,w\in\supp_{v_0} b\,\},\quad\text{and}\\
\{\,s(v)\,\mid\,v\in\supp_{v_0} a\,\}&=\{\,s(w)\,\mid\,w\in\supp_{v_0} b\,\}.
\end{align*}
\end{lemma}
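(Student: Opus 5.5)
The plan is to exploit $ab=ba$ together with Lemma~\ref{researchlemma}, which guarantees that certain products survive in $\supp ab$, and Lemma~\ref{research1}, which controls how such a product can be refactored. Throughout, let $n$ be the common degree of $a$ and $b$. I assume $b\neq 0$, since otherwise the statement fails trivially. Also $v_0\in\supp_{v_0}a$, so $\supp_{v_0}a\neq\emptyset$ is immediate.

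\emph{Prefixes, inclusion $\subseteq$.} Fix a prefix value $q=p(v)$ for some $v\in\supp_{v_0}a$. Among the elements $w\in\supp_{v_0}a$ with $p(w)=q$, choose $x$ with $s(x)$ maximal. Since $x\sim v_0$ is non-pure, $x$ satisfies the second alternative of condition~1 of Lemma~\ref{researchlemma}. Next pick any $y_0\in\supp b$, and choose $y\in\supp_{y_0}b$ with $p(y)$ maximal in $\{p(w)\mid w\in\supp_{y_0}b\}$; this satisfies condition~2. Lemma~\ref{researchlemma} then gives $xy\in\supp ab=\supp ba$. Hence $xy=y'x'$ for some $y'\in\supp b$ and $x'\in\supp a$, both homogeneous of length $n$. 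Applying Lemma~\ref{research1} with $v=x$ and $v'=y'$:
\begin{itemize}
\item part~(i) gives $y'\sim x\sim v_0$, so $y'\in\supp_{v_0}b$; in particular $\supp_{v_0}b\neq\emptyset$;
\item part~(ii), since $x$ is non-pure, gives $p(y')=p(x)=q$.
\end{itemize}
Thus every prefix occurring in $\supp_{v_0}a$ also occurs in $\supp_{v_0}b$.

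\emph{Prefixes, inclusion $\supseteq$.} We now know $\supp_{v_0}b$ is nonempty and consists of non-pure elements. So the same argument applies with the roles of $a$ and $b$ interchanged, starting from any element of $\supp_{v_0}b$. This gives the reverse inclusion and hence the first equality.

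\emph{Suffixes.} Here the argument is mirrored by putting the class-$[v_0]$ factor on the right. Fix a suffix value $r=s(v)$ with $v\in\supp_{v_0}a$. Choose $x\in\supp_{v_0}a$ with $s(x)=r$ and $p(x)$ maximal among such elements, so $x$ meets the second alternative of condition~2. Choose $y\in\supp b$ with $s(y)$ maximal in its class, so $y$ meets condition~1. Lemma~\ref{researchlemma} gives $yx\in\supp ba=\supp ab$, so $yx=x'y'$ with $x'\in\supp a$ and $y'\in\supp b$ of length $n$. Lemma~\ref{research1}, applied with $w=x$ and $w'=y'$, gives $y'\sim x\sim v_0$ by~(i) and $s(y')=s(x)=r$ by~(iii). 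Swapping $a$ and $b$ as before yields equality of the suffix sets.

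\emph{Main obstacle.} The delicate point is matching which factor plays which role in Lemma~\ref{research1} after rewriting the product in the opposite order. The first $n$ letters of $xy$ form the $b$-factor $y'$, while the last $n$ letters of $yx$ form the $b$-factor $y'$. One must also choose, in each case, the extremal element that satisfies the relevant maximality hypothesis of Lemma~\ref{researchlemma}.
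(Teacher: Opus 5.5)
Your proof is correct and follows essentially the same argument as the paper. You choose $x\in\supp_{v_0}a$ with maximal suffix among elements sharing its prefix, pair it with a prefix-maximal $y\in\supp b$, apply Lemma~\ref{researchlemma} and $ab=ba$, read off $y'\in\supp_{v_0}b$ and $p(y')=p(x)$ from Lemma~\ref{research1}, and conclude by swapping $a$ and $b$; your explicit mirrored suffix argument spells out the paper's ``by symmetry'', and the assumption $b\neq 0$ is automatic, since $v_0\in\supp a$ and equal total degree with $d(0)=-\infty$ exclude $b=0$.
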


\begin{proof}
%The set $\mathcal{A}_{v_0}$ is nonempty by definition. Without loss of generality, we can assume $v_0$ is chosen such that $s(v_0)$ is maximal in $\mathcal{A}_{v_0}$. Let $w\in \supp b$ be such that $p(w)$ is maximal in $\mathcal{B}_w$. By Proposition \ref{researchlemma}, we have $v_0w\in \supp ab=\supp ba$, implying $v_0\in\mathcal{B}_{v_0}$ by Proposition \ref{research1}.

We notice $\supp_{v_0} a$ is nonempty by definition, so it suffices to prove $\{\,p(v)\,\mid\,v\in\supp_{v_0} a\,\}\subseteq\{\,p(w)\,\mid\,w\in\supp_{v_0} b\,\}$. Indeed, nonemptiness of $\supp_{v_0} b$ will then follow immediately, while the converse inclusion and remaining equality will follow by symmetry.

Let $v\in\supp_{v_0} a$ be such that $s(v)$ is maximal in $\{s(u)\mid u\in\supp_{v_0} a,\; p(u)=p(v)\}.$ We fix an element $w\in\supp b$ with $p(w)$ maximal in $\supp_w b$. By Lemma \ref{researchlemma}, we obtain $vw\in \supp ab= \supp ba$, so $vw=w'v'$ with $w'\in\supp b$ and $v'\in\supp a$. By Lemma \ref{research1}, $w\in\supp_{v_0} b$ and $p(w')=p(v)$, proving the claim.
\end{proof}

To apply Proposition \ref{bergman}, it suffices to consider homogeneous elements, as we work within a single component of $\Gr(\copalg)$. We will first consider the case of two homogeneous commuting elements of equal degree and later turn to higher generality.

The main step in our proof consists of two largely similar lemmas, each treating a single case depending on the purity of the commuting elements.

\begin{lemma}\label{mainone}
Let $X$ and $Y$ be sets and $k$ a field. Let $a$ and $b$ be two commuting elements of equal total degree $n$ in $R=k\langle X\rangle * k[Y]$. If at least one of $a$ or $b$ is not top-pure, there exists a scalar $\lambda\in k$ such that $\overline{a}=\overline{\lambda b}$ in $(\Gr R)_n$.
\end{lemma}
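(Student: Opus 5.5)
The plan is to pass to leading terms, split off the pure parts, and use the top monomial with respect to the strict order $<$ to pin down the proportionality constant.

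\emph{Reduction and splitting.} Let $A=\overline{a}$ and $B=\overline{b}$, viewed as homogeneous elements of degree $n$ of $R$ via the canonical isomorphism with $\Gr R$. Comparing the degree-$2n$ components of $ab=ba$ gives $AB=BA$, and it suffices to find $\lambda$ with $A=\lambda B$. By symmetry we may assume $a$ is not top-pure, so $A$ has a non-pure monomial in its support. Write $A=A_p+A_{np}$ and $B=B_p+B_{np}$, where $A_p,B_p\in k[Y]$ collect the pure monomials and $A_{np},B_{np}$ the non-pure ones. Then $A_{np}\neq 0$, and Lemma~\ref{research2} applied to a non-pure $v_0\in\supp A$ gives $\supp_{v_0}B\neq\emptyset$, so $B_{np}\neq 0$.

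\emph{Projecting onto position patterns.} A monomial of length $2n$ is a product of two length-$n$ monomials in a way that determines, up to the relation $\sim$, which halves contain letters of $X$ (Lemma~\ref{research1}). So I project the identity $AB=BA$ onto words having $X$-letters in both halves, and separately onto words whose $X$-letters all lie in the first half. This yields
\[
A_{np}B_{np}=B_{np}A_{np},\qquad A_{np}B_p=B_{np}A_p .
\]

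\emph{Main step: $A_{np}=\lambda B_{np}$.} Let $M_A$ and $M_B$ be the $<$-maximal monomials of $\supp A_{np}$ and $\supp B_{np}$. Since $\copmon$ is strictly ordered, the maximal monomial of a product of two nonzero elements is the product of their maximal monomials, with no cancellation. Hence $M_AM_B=M_BM_A$, with $M_A,M_B$ non-pure of equal length $n$.

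I then need the combinatorial fact that two commuting non-pure monomials of equal length in $\copmon$ are equal. This is the step I expect to be the main obstacle. I would argue it via the trace-monoid structure: in the non-commutation graph every $x\in X$ is adjacent to all other letters, so any trace containing a letter of $X$ has connected alphabet. By the Duchamp--Krob/Cori--Perrin description of commuting traces, two commuting connected traces are powers of a common primitive trace, and equal length forces equality. Alternatively, a direct induction comparing first letters of $M_AM_B=M_BM_A$ should work.

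Granting $M_A=M_B$, choose $\lambda$ (nonzero, since $M_A\in\supp A_{np}$) to cancel the coefficient of $M_A$, and set $C=A_{np}-\lambda B_{np}$. This $C$ commutes with $B_{np}$. If $C\neq0$, it is homogeneous with only non-pure monomials, so the same argument applied to the pair $(C,B_{np})$ gives $\max\supp C=M_B=M_A$. But $\max\supp C<M_A$ by construction, a contradiction. So $A_{np}=\lambda B_{np}$.

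\emph{Pure parts.} Substituting into the second projected identity gives
\[
\lambda B_{np}B_p=B_{np}A_p,\quad\text{i.e.}\quad B_{np}(\lambda B_p-A_p)=0 .
\]
Since $R$ has no zero divisors (again by the strict order), and $B_{np}\neq0$, we get $A_p=\lambda B_p$. Therefore $A=\lambda B$, that is, $\overline a=\overline{\lambda b}$ in $(\Gr R)_n$.
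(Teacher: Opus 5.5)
Your argument is correct, and it takes a different route from the paper. The paper never separates off the pure part. It fixes a non-pure $w\in\supp b$ and works inside the single class $[w]$. There it selects, in $\supp_w a$ and $\supp_w b$, the elements with maximal prefix and then maximal suffix, and uses Lemma~\ref{researchlemma} to guarantee that the specific product $vu$ survives in $\supp ba=\supp ab$. It then compares prefixes and suffixes via Lemma~\ref{research1} to force $v\in\supp a$, subtracts $\lambda b$, and reruns the argument.

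You instead project $AB=BA$ according to which halves carry letters of $X$. You handle $A_{np}B_{np}=B_{np}A_{np}$ with the global $<$-maximal monomial, the same device as $\varphi$ in the proof of Theorem~\ref{glavni}. You then recover the pure parts from $A_{np}B_p=B_{np}A_p$ and the absence of zero divisors.

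Your route is shorter and bypasses Lemmas~\ref{researchlemma} and~\ref{research2} entirely; even $B_{np}\neq 0$ follows from the second projected identity and the absence of zero divisors. The price is that you use the nontrivial strict order on all of $\copmon$. Every comparison in the paper's proof is between prefixes and suffixes, i.e.\ inside $[Y]$. The paper's class-by-class machinery is also what it reuses in the top-pure case (Lemmas~\ref{aux} and~\ref{maintwo}), where the leading terms are pure and the relevant information sits in lower degrees.

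Finally, the step you flagged as the main obstacle needs no trace-monoid theory. Apply Lemma~\ref{research1} to $M_AM_B=M_BM_A$ with $v=w'=M_A$ and $w=v'=M_B$. Parts (i)--(iii) give $M_A\sim M_B$, $p(M_A)=p(M_B)$ and $s(M_A)=s(M_B)$, hence $M_A=M_B$.
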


\begin{proof}
As we are working within a single component of $\Gr R$, we can equivalently assume $a$ and $b$ are homogeneous elements of equal total degree. Without loss of generality, let $b$ be non-pure. We wish to prove there exists a scalar $\lambda\in k$ such that $a=\lambda b$.

Since $b$ is non-pure, there exists a non-pure element $w\in \supp b$. By Lemma $\ref{research2}$, the sets $\supp_w a$ and $\supp_w b$ are both nonempty and the sets of prefixes (and suffixes) of their elements are equal.

Let $\supp_w^p a$ and $\supp_w^p b$ be the subsets of $\supp_w a$ and $\supp_w b$, respectively, consisting of the elements with the maximal prefix. Let $u\in \supp_w^p a$ and $v\in \supp_w^p b$ be such that $s(u)$ and $s(v)$ are as large as possible among the suffixes of elements in the respective sets. By symmetry, we may also assume $s(u)\leq s(v)$.

Since the prefix of $u$ is maximal by definition, Lemma \ref{researchlemma} implies $vu\in \supp ba= \supp ab$. We can thus write $vu=u_1v_1$, where $u_1\in \supp_w a$ and $v_1\in \supp_w b$ by Lemma \ref{research1}. Using this lemma, we can make some further observations:

Since $u_1$ is non-pure, we have $p(u_1)=p(v)$. In particular, this implies $u_1\in\supp_w^p a$. Next, since $u_1\in\supp_w^p a$, it follows $s(u_1)\leq s(u)\leq s(v)$ by our choice of $u$.
Lastly, by our choice of $v$, we have $p(v_1)\leq p(v)=p(u)$.

To combine the above, we apply Lemma \ref{research1} one final time to observe that $vu=u_1v_1$ implies $s(v)p(u)=s(u_1)p(v_1)$. Since we have $s(u_1)\leq s(v)$ and $p(v_1)\leq p(u)$, this can only hold if both inequalities are equalities. In particular, this shows $u_1=v$, so we obtain $v\in \supp a$.

To conclude, let $\lambda\in k$ be such that $v\not\in \supp (a-\lambda b)$. Since the element $a-\lambda b$ also commutes with $b$, this is only possible if it is trivial, implying $a=\lambda b$.
\end{proof}

In case $a$ and $b$ are both top-pure, the key arguments are similar, but some additional care must be taken. We begin with the following two lemmas.

\begin{lemma}\label{research3}
Let $X$ and $Y$ be sets and $k$ a field. Let $a$ and $b$ be two commuting elements of $k\langle X\rangle * k[Y]$ of equal total degree. If $a$ and $b$ are top-pure and $b$ is non-pure, then $a$ is also non-pure. Additionally, there exists a positive integer $\ell$ such that neither $a$ nor $b$ are $\ell$-pure but both are $m$-pure for all $m>\ell$.
\end{lemma}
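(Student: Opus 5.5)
The plan is to compare homogeneous components of $ab=ba$ in a degree just below the top. Write $a=\sum_i a_i$ and $b=\sum_i b_i$ with homogeneous components, and let $d$ be the common total degree. Top-purity means $a_d$ and $b_d$ are nonzero pure elements, so $d\ge 1$.

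\emph{Choice of $\ell$.} Since $b$ is non-pure, some $b_m$ is non-pure. Let $\ell_b$ be the largest such $m$. Non-pure homogeneous elements have degree at least $1$, and $b$ is top-pure, so $1\le \ell_b<d$. If $a$ is non-pure, define $\ell_a$ in the same way; otherwise put $\ell_a=-\infty$. Set $\ell=\max(\ell_a,\ell_b)$. By symmetry of the situation we may assume $\ell=\ell_b$. By construction, $a_m$ and $b_m$ are pure for all $m>\ell$. It then suffices to show that $a_\ell$ is non-pure. This gives that $a$ is non-pure, that $\ell_a=\ell$, and that neither $a$ nor $b$ is $\ell$-pure.

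\emph{Extracting a commutator identity.} Look at the component of degree $d+\ell$ in $ab=ba$. It is $\sum_{i+j=d+\ell}a_ib_j$. Since all components have degree at most $d$, only pairs with $\ell\le i,j\le d$ occur. The pairs $(d,\ell)$ and $(\ell,d)$ give $a_db_\ell$ and $a_\ell b_d$. Every other pair has $\ell<i,j<d$, so both factors are pure. Such products commute because $k[Y]$ is commutative, so they cancel when we compare with the corresponding sum for $ba$. We obtain
\[
a_db_\ell-b_\ell a_d=b_da_\ell-a_\ell b_d .
\]
Suppose, for contradiction, that $a_\ell$ is pure (possibly zero). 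Then the right-hand side vanishes, so the nonzero pure element $a_d$ of degree $d$ commutes with the non-pure homogeneous element $b_\ell$ of degree $\ell<d$.

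\emph{Contradiction via Lemma~\ref{researchlemma}.} This is the main step. Take $q$ to be the largest element of $\supp a_d$. Since $a_d$ is pure and homogeneous, $\supp_q a_d=\supp a_d$ and $s(q)=q$, so condition~1 of Lemma~\ref{researchlemma} holds. Take $y\in\supp b_\ell$ non-pure with $p(y)$ maximal in $\{p(w)\mid w\in\supp_y b_\ell\}$, so condition~2 holds. Lemma~\ref{researchlemma} then gives
\[
qy\in\supp(a_db_\ell)=\supp(b_\ell a_d),
\]
so $qy=y'q'$ with $y'\in\supp b_\ell$ and $q'\in\supp a_d$.

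Now count positions of noncommuting variables.
\begin{itemize}
\item In $qy$, every variable of $X$ lies at a position greater than $d$, and there is at least one, since $y$ is non-pure.
\item In $y'q'$, the last $d$ positions are occupied by commuting variables, so every variable of $X$ lies at a position at most $\ell<d$.
\end{itemize}
Positions of noncommuting variables are well defined in $\copmon$, since only consecutive commuting variables may be permuted (as in the proof of Lemma~\ref{research1}). This is a contradiction, so $a_\ell$ is non-pure.

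The only delicate point I anticipate is the bookkeeping in the degree $d+\ell$ component. One must check that no mixed term other than $a_db_\ell$ and $a_\ell b_d$ survives, and this relies on $\ell$ being the maximum of $\ell_a$ and $\ell_b$.
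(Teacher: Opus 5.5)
Your proof is correct and follows essentially the same route as the paper: take $\ell$ to be the largest degree with a non-pure component (swapping $a$ and $b$ if needed), assume the other element's $\ell$-component is pure, and apply Lemma~\ref{researchlemma} to the maximal top-degree monomial and a prefix-maximal non-pure monomial of degree $\ell$. The contradiction then comes from where the noncommuting variables sit, which the paper expresses as $|s(vw)|\ge n>|w|$. Your explicit extraction of $a_db_\ell-b_\ell a_d=b_da_\ell-a_\ell b_d$ from the degree-$(d+\ell)$ component is a welcome tightening, since the paper applies Lemma~\ref{researchlemma}, which is stated for homogeneous elements, to the non-homogeneous $a,b$ and leaves that bookkeeping implicit.
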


\begin{proof}
Let $n$ be the total degree of $a$ and $b$, and let $\ell$ be the largest positive integer such that $b$ is not $\ell$-pure. We will show that $a$ is not $\ell$-pure but is $m$-pure for all $m\geq \ell$. We assume the contrary. By switching the roles of $a$ and $b$ and increasing $\ell$ appropriately if required, we may assume that $a$ is $m$-pure for all $m\geq \ell$.

Let $w \in \supp b$ be an non-pure element of length $\ell$. We can choose $w$ such that $p(w)$ is maximal in  $\{p(u)\mid u\in\supp_w b\}$. Taking $v\in \supp a$ to be the maximal element of length $n$, Lemma \ref{researchlemma} implies $vw\in \supp ab= \supp ba$.

Since all the elements of $\supp a$ and $\supp b$ of length larger than $\ell$ are pure and elements of $\supp a$ of length $\ell$ are also pure, we must have $vw=v'w'$ with $v'\in \supp b$, $|v'|=\ell$, and $w'\in\supp a$, $|w'|=n$. But then, we have $|s(vw)|=|s(v'w')|\geq n>|w|$, so $w$ must be pure, a contradiction.
\end{proof}

\begin{lemma}\label{aux}
Let $X$ and $Y$ be sets and $k$ a field. Let $a$ and $b$ be two commuting elements of equal total degree $n$ in $R=k\langle X\rangle * k[Y]$. If $a$ and $b$ are top-pure, then for every non-pure element $w \in\supp b$ such that all elements $v\in\supp b$ with $|v|>|w|$ are pure, we have
$$\{\,p(u)\mid u\in\supp_w a\,\}=\{\,p(u)\mid u\in\supp_w b\,\}.$$
\end{lemma}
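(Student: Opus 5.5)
The plan is to imitate the proof of Lemma~\ref{research2}, but to work in the single homogeneous component of degree $\ell+n$ of $ab=ba$, where $\ell=|w|$. Decompose $a=\sum_i a_i$ and $b=\sum_i b_i$ into homogeneous components. Since $w$ is non-pure and $b$ is top-pure, we have $\ell<n$.

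First I record the structure of the two elements. By the hypothesis on $w$, the element $b$ is not $\ell$-pure but is $m$-pure for all $m>\ell$. Lemma~\ref{research3} then gives the same for $a$: $a$ is not $\ell$-pure and is $m$-pure for all $m>\ell$. In particular, $\supp_w a$ consists of elements of $\supp a_\ell$. This also makes the situation symmetric in $a$ and $b$, since any non-pure element of $\supp_w a$ satisfies the hypothesis of the lemma with the roles of $a$ and $b$ exchanged. So it suffices to prove the inclusion
\[\{\,p(u)\mid u\in\supp_w b\,\}\subseteq\{\,p(u)\mid u\in\supp_w a\,\}.\]

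To prove this inclusion, fix a prefix $P$ occurring in $\supp_w b$. Choose $u\in\supp_w b$ with $p(u)=P$ and with $s(u)$ maximal among elements of $\supp_w b$ having prefix $P$. Let $v$ be the maximal element of $\supp a_n$. It is pure, so $p(v)=v$ is trivially maximal among prefixes in $\supp_v a_n$. Applying Lemma~\ref{researchlemma} to the homogeneous elements $b_\ell$ and $a_n$ gives $uv\in\supp b_\ell a_n$.

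Next I check that $uv$ survives in $(ba)_{\ell+n}=\sum_{i+j=\ell+n} b_ia_j$. Any pair $(i,j)$ with $i+j=\ell+n$ and $i,j\le n$ has $i,j\ge \ell$, so the other contributing pairs are $(n,\ell)$ and pairs with $i,j>\ell$. The products $b_ia_j$ with $i,j>\ell$ are pure, while $uv$ is non-pure, so they do not contribute. Every element of $\supp b_n a_\ell$ begins with $n$ commuting variables. But $uv$ has prefix of length $|p(u)|<\ell<n$, so this pair does not contribute either. Hence $uv\in\supp (ba)_{\ell+n}=\supp(ab)_{\ell+n}$.

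Running the same length argument on $ab$, the pure-by-pure products again do not contribute. Every element of $\supp a_n b_\ell$ begins with $n$ commuting variables, so that pair does not contribute. Therefore $uv=u'v'$ with $u'\in\supp a_\ell$ and $v'\in\supp b_n$. Lemma~\ref{research1}(i) gives $u'\sim u$, so $u'\in\supp_w a$. Since $u$ is non-pure, Lemma~\ref{research1}(ii) gives $p(u')=p(u)=P$, which is the required inclusion.

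The main obstacle is this bookkeeping of which homogeneous components can produce the word $uv$. Lemma~\ref{researchlemma} only applies to homogeneous factors, whereas $a$ and $b$ are not homogeneous. The key point is that, because the elements above length $\ell$ are pure and the prefix of $uv$ is short, only the pair $(b_\ell,a_n)$ on one side and $(a_\ell,b_n)$ on the other can produce $uv$.
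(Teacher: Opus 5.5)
Your proof is correct and has the same skeleton as the paper's. You make the same choice of an element of $\supp_w b$ with maximal suffix among those sharing a given prefix. You then apply Lemma~\ref{researchlemma} to homogeneous components, use a length count to rule out all other pairs of components, and use Lemma~\ref{research1}(ii) to transfer the prefix into $\supp_w a$.

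The difference lies in the second factor.

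\begin{itemize}
\item \textbf{The paper's route.} It multiplies by a non-pure $z\in\supp a$ of length $\ell$ with $p(z)$ maximal in $\{p(u)\mid u\in\supp_z a\}$, and works in degree $2\ell$. There the exclusion is symmetric: both the prefix and the suffix of the product are shorter than $\ell$, so any factorization coming from $ab$ or $ba$ must have both factors of length $\ell$.
\item \textbf{Your route.} You multiply instead by the largest element of the pure top component $a_n$, which is the device the paper itself uses in Lemmas~\ref{research3} and~\ref{maintwo}, and you work in degree $\ell+n$. Your exclusion is asymmetric. Non-purity of $uv$ rules out the pure-by-pure pairs, and $|p(uv)|<\ell<n$ rules out $b_na_\ell$ and $a_nb_\ell$.
\end{itemize}

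A small advantage of your route is that it needs from Lemma~\ref{research3} only that $a$ is $m$-pure for all $m>\ell$. It does not need that $a$ has a non-pure element of length $\ell$, since your inclusion produces one. The paper's route, by contrast, only ever multiplies degree-$\ell$ components.
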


\begin{proof}
By Lemma \ref{research3}, there exists a positive integer $\ell$ such that neither $a$ nor $b$ are $\ell$-pure but both are $m$-pure for all $m>\ell$.  Let $w$ be a non-pure length $\ell$ element in $\supp b$. We consider an element $v\in\supp_w b$ such that $s(v)$ is maximal in $\{s(u)\mid u\in \supp_w b,\; p(u)=p(v)\}$. It suffices to prove that there exists an element $v_1\in \supp_w a$ with $p(v_1)=p(v)$. Indeed, this will immediately imply $\{p(u)\mid u\in\supp_w a\}\supseteq\{p(u)\mid u\in\supp_w b\},$ and the converse direction follows by symmetry.

Let $z$ be a non-pure element of length $\ell$ in $\supp a$ such that $p(z)$ is maximal in $\{p(u)\mid u\in \supp_z a\}$. In particular, since $z$ and $v$ are non-pure, $|p(vz)|$ and $|s(vz)|$ are both smaller than $\ell$, so if $vz=v'z'$ for some $v'\in \supp b$, $z'\in\supp a$, we must have $|v'|=|z'|=\ell$, as all elements of length larger than $\ell$ of both $\supp a$ and $\supp b$ are pure.

By considering only the homogeneous degree $\ell$ parts of $a$ and $b$, Lemma \ref{researchlemma} along with the preceding paragraph implies that $vz\in\supp ba=\supp ab$. We thus have $vz=v_1z_1$ with $v_1\in \supp a$ and $z_1\in\supp b$ and $|v_1|=|z_1|=\ell$, so Lemma \ref{research1} implies that $v_1\in\supp_w a$ and $p(v_1)=p(v)$.
\end{proof}

\begin{lemma}\label{maintwo}
Let $X$ and $Y$ be sets and $k$ a field. Let $a$ and $b$ be two commuting elements of equal total degree $n$ in $R=k\langle X\rangle * k[Y]$. If $a$ and $b$ are top-pure but $b$ is non-pure, there exists a scalar $\lambda\in k$ such that $\overline{a}=\overline{\lambda b}$ in $(\Gr R)_n$.
\end{lemma}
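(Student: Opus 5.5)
The plan is to reduce to the top homogeneous components and imitate the proof of Lemma~\ref{mainone}. This time the pure top-degree part of $b$ is multiplied by a non-pure component of lower degree $\ell$, and Lemma~\ref{research3} and Lemma~\ref{aux} control that component. Write $a=\sum_i a_i$ and $b=\sum_i b_i$ for the homogeneous decompositions. By hypothesis $a_n,b_n\in k[Y]$ are nonzero.

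By Lemma~\ref{research3} there is an $\ell$ such that $a_\ell$ and $b_\ell$ are non-pure while $a_m$ and $b_m$ are pure for all $m>\ell$; in particular $\ell<n$. Let $v$ be the largest element of $\supp b_n$ and $u$ the largest element of $\supp a_n$. As a first attempt I assume $u\le v$; the possibility $v<u$ is the gap addressed at the end. Fix a non-pure $z\in\supp a_\ell$ with $p(z)$ maximal in $\{p(t)\mid t\in\supp_z a\}$.

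\emph{Step 1: $vz\in\supp ba$.} Suppose $vz=z'y'$ with $z'\in\supp b_j$ and $y'\in\supp a_i$, where $i+j=n+\ell$. The first noncommuting variable of $vz$ sits in position $n+|p(z)|+1$. Since every non-pure element of $\supp a\cup\supp b$ has length at most $\ell<n$, the element $z'$ must be pure. If $j<n$, then $i>\ell$, so $y'$ is pure, and then $vz$ would be pure, which is false. Hence $j=n$, $i=\ell$, and Lemma~\ref{research1} gives $y'\in\supp_z a$. Thus the only contributions to the monomial $vz$ come from $b_na_\ell$. Applying Lemma~\ref{researchlemma} to $b_n$ and $a_\ell$ shows that this contribution is nonzero: $v$ is trivially suffix-maximal because $\supp_v b_n\subseteq\supp b_n$ and $v$ is the maximum there, and $z$ is prefix-maximal by choice. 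So $vz\in\supp ba=\supp ab$.

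\emph{Step 2: extract $v\in\supp a$.} Write $vz=u_1w_1$ with $u_1\in\supp a$ and $w_1\in\supp b$. The same length argument as in Step 1 forces $u_1$ to be pure of length $n$, so $u_1\in\supp a_n$, and $w_1\in\supp_z b_\ell$. By Lemma~\ref{research1}(iv), comparing the commuting parts gives $v\,p(z)=u_1\,p(w_1)$. Now $u_1\le u\le v$ by the choice of $u$ and the assumption. Lemma~\ref{aux}, applied to the non-pure element $z$ with roles of $a$ and $b$ swapped (legitimate since $a$ satisfies the same hypotheses), says that the prefix sets of $\supp_z a$ and $\supp_z b$ coincide, so $p(w_1)\le p(z)$. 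Strictness of the order then forces $u_1=v$, hence $v\in\supp a_n$.

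\emph{Step 3: conclude.} Choose $\lambda$ with $v\notin\supp(a_n-\lambda b_n)$ and put $c=a-\lambda b$, which commutes with $b$. Suppose $c_n\neq0$. Then $c$ has degree $n$ and is top-pure, and $b$ is non-pure, so Lemmas~\ref{research3} and~\ref{aux} apply to the pair $(c,b)$. Rerunning Steps 1--2 with $c$ in place of $a$ (same $v$, a new $z$, and $u$ now the largest element of $\supp c_n$) gives $v\in\supp c_n$, a contradiction. Hence $\overline{a}=\overline{\lambda b}$.

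The main obstacle is the comparison $u\le v$ used in Step 2, including when Step 3 reuses the argument for $c$. If instead the maximum of $\supp a_n$ exceeds $v$, I would swap the roles of $a$ and $b$: take $v$ to be the maximum of $\supp a_n$, let $z$ range over non-pure elements of $\supp b_\ell$, and conclude that this maximum lies in $\supp b_n$. That would contradict $v$ being the maximum of $\supp b_n$, so the assumption $u\le v$ in fact always holds. The swap needs $a$ non-pure, which Lemma~\ref{research3} guarantees.
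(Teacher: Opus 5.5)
Your proposal is correct and is essentially the paper's proof. You make the same choices (the largest pure top-degree monomial $v$ of $b$, and a prefix-maximal non-pure $z$ of length $\ell$ in $\supp a$), use Lemma~\ref{research1}(iv) and Lemma~\ref{aux} in the same way to force $v\in\supp a$, and finish with the same subtract-$\lambda b$-and-rerun step; your explicit length bookkeeping and your observation that $\max\supp a_n>v$ is in fact impossible just spell out what the paper compresses into ``by symmetry''.
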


\begin{proof}Let $v$ be the maximal length $n$ element of $\supp b$. Since $|v|=n$, the element $v$ is pure and we have the equality $v=p(v)=s(v)$. By symmetry, we may assume $v\geq u$ for all $u\in\supp a$, $|u|=n$.

Let $u_1$ be an element in $\supp a$ such that $|u_1|=\ell$ and $p(u_1)$ is maximal in $\{p(w)\mid w\in \supp_{u_1} a,\;p(w)=p(u_1)\}$. Since basis elements of length larger than $\ell$ are pure, it follows that if $vu_1=v'u_1'$ holds for some $v'\in \supp b$ and $u_1'\in \supp a$, we necessarily have $|v'|=n$ and $|u_1'|=\ell$, but the maximality of $v$ and $p(u_1)$ then forces $v=v'$ and $u_1=u_1'$, so $vu_1\in \supp ba=\supp ab$.

%By applying Lemma \ref{researchlemma} on the homogeneous parts of $a$ and $b$ of degrees $\ell$ and $n$, respectively, we obtain 
We thus have $vu_1=v'u_1'$ with $v'\in\supp a$, $|v'|=n$, and $u_1'\in\supp b$, $|u_1'|=\ell$. By Lemma \ref{research1}, we have $u_1'\in \supp_{u_1} b$ and by Lemma \ref{aux}, the prefix $p(u_1)$ is also maximal in the $\{p(w)\mid w\in \supp_{u_1} b,\;p(w)=p(u_1)\}$. Additionally, $|v'|=n$ implies the element $v'$ is pure, so we have $v'=p(v')=s(v')$.

Combining the above properties, we obtain $v'\leq v$ and $p(u_1')\leq p(u_1)$. This implies
$$p(vu_1)=p(v'u_1')=v'p(u_1')\leq vp(u_1)=p(vu_1),$$
so equality must hold throughout. In particular, this shows $v=v'$, so we can proceed just as in Lemma \ref{mainone}: it follows that $v\in \supp a$, so there exists a scalar $\lambda\in k$ with $v\not \in \supp (a-\lambda b)$. By repeating the above steps with $a-\lambda b$ in place of $a$ and recalling this element also commutes with $b$, we see that $v\in\supp (a-\lambda b)$ would lead to a contradiction, so the degree of $a-\lambda b$ is strictly smaller than $n$, concluding the proof.
\end{proof}

\section{Commutativity of the centralizer}

We are now prepared to prove our main theorem.

\begin{proof}[Proof of Theorem \ref{glavni}] Let $C=C(u)$ be the centralizer of $u$ in $R=\copalg$. If the element $u$ is pure, it clearly commutes with all pure elements of $R$. If $a$ is a non-pure element commuting with $u$, there exist integers $p$ and $q$ such that $a^p$ and $u^q$ have the same total degree. Now, since a power of a non-pure element is non-pure, the element $a^p$ is non-pure and commutes with $u^q$, contradicting Lemma \ref{mainone} or Lemma \ref{research3}. Consequently, the centralizer in this case is precisely $k[Y]$ and is thus commutative.

Next, we assume the element $u$ is non-pure. We note that $\Gr C$ inherits the graded algebra structure from the associated graded algebra $\Gr R$. We will show that each component of $\Gr C$ is at most $1$-dimensional over $k$, which will then imply the commutativity of $C$ by Proposition \ref{bergman}.

We first employ the strict order of the elements of $\langle X\rangle *[Y]$. For an element $x\in R$, we denote by $\varphi(x)$ the largest element of $\copmon$ with respect to this order appearing with nonzero coefficient in $\overline{x}$. We denote this coefficient by $c(x)$.

Let $a$ and $b$ be two elements of $C$ of equal total degree $n$. By scaling appropriately, we can assume $c(a)=c(b)=1$. As before, we note that there exist positive integers $p$ and $q$ such that $a^p$, $b^p$ and $u^q$ have equal total degree. Since the monoid ordering is strict, we have the relation
$$\varphi(a^p)=\varphi(a)^p\quad\text{and}\quad\varphi(b^p)=\varphi(b)^p.$$
As $\varphi(a^p)$ can only appear in $\overline{a^p}$ as a product of $p$ copies of $\varphi(a)$, its coefficient in $\overline{a^p}$ must equal $1$. Analogously, we obtain $c(b^p)=1$.

If either of $a^p$ and $u^q$ is not top-pure, we can now apply Lemma \ref{mainone}. Otherwise, we apply Lemma \ref{maintwo}. Analogously, we apply the appropriate proposition with $b^p$ in place of $a^p$. We thus deduce that there exist coefficients $\lambda_1, \lambda_2\in k$ satisfying
$$\overline{a^p}=\lambda_1\overline{u^q}\quad\text{and}\quad\overline{b^p}=\lambda_2\overline{u^q}.$$
From the above relation, we can now deduce
$$\lambda_1c(u^q)=c(\lambda_1u^q)=c(a^p)=c(b^p)=c(\lambda_2u^q)=\lambda_2c(u^q),$$
implying $\lambda_1=\lambda_2$. This shows the equality $\overline{a^p}=\overline{b^p}$, in particular implying $\varphi(a^p)=\varphi(b^p)$, from which we can conclude $\varphi(a)=\varphi(b)$.

It remains to prove the element $a-b$ has strictly smaller total degree than $n$, implying the leading terms of $a$ and $b$ are equal, as desired. Arguing by contradiction, we assume $a-b$ has total degree $n$. Since this element commutes with $u$, we can repeat the above arguments to conclude $\overline{a^p}=\overline{(a-b)^p}.$

We now compare $\varphi(a^p)$ and $\varphi((a-b)^p)$. On one hand, the relation $\overline{a^p}=\overline{(a-b)^p}$ shows $\varphi(a^p)=\varphi((a-b)^p)$. On the other hand, we notice $\varphi(a)=\varphi(b)$, so this term vanishes in $a-b$ and we have $\varphi(a-b)<\varphi(a)$. This implies $\varphi((a-b)^p)<\varphi(a^p)$, leading to a final contradiction.
\end{proof}

\begin{remark}
Once the $0$ or $1$-dimensionality of $(\Gr C)_n$ for all $n$ is proven, the argument given in the proof of \cite[Proposition~2.2]{bergman} can be applied verbatim to show that $C$ is a module-finite integral extension of $k[u]$.
\end{remark}

\paragraph{Acknowledgements} The author was supported by the Slovenian Research and Innovation Agency (ARIS) program P1-0288.
\printbibliography
\end{document}